\DeclareMathAlphabet{\mathpzc}{OT1}{pzc}{m}{it}
\newtheorem{defi}{Definition}[section]
\newtheorem{prop}[defi]{Proposition}
\newtheorem{coro}[defi]{Corollary}
\newtheorem{fact}[defi]{Fact}
 \title{A remark on the minimal dilation of the semigroup generated by a normal UCP-map}
 \author{Yusuke Sawada}
\date{}
\begin{document}
\maketitle

\noindent
Abstract: There are known three ways to construct the minimal dilation of the discrete semigroup generated by a normal unital completely positive map on a von Neumann algebra, which are given by Arveson, Bhat-Skeide and Muhly-Solel. In this paper, we clarify the relation of the constructions by Bhat-Skeide and Muhly-Solel.\\

\noindent
Keywords: dilations, completely positive maps, von Neumann algebras, $W^*$-bimodules, relative tensor products\\

\noindent
MCD (2010): 45L55 (Primary), 46L08 (Secondary)

\section{Introduction}
A dynamical transformation in a quantum physical system is described by a completely positive (CP) map on an operator algebra in a broad sense. We consider a von Neumann algebra $M$ acting on a Hilbert space $\mathcal{H}$ and a normal unital completely positive (UCP) map $T$ on $M$. The Stinespring's dilation theorem ensures the existence of a normal representation $(\pi,\mathcal{K})$ of $M$ and an isometry $v:\mathcal{H}\to\mathcal{K}$ such that $T(x)=v^*\pi(x)v$ for all $x\in
M$. When we consider a time evolution, the $n$-times transformation $T^n$ is important, but it is difficult to deal with representations $\{\pi_n\}_{n=1}^\infty$ associated with $\{T^n\}_{n=1}^\infty$. Now we consider the minimal dilation of the semigroup $\{T^n\}$ that is a larger von Neumann algebra $N\supset
M$ and a $*$-endomorphism $\alpha$ on $N$ such that $T^n$ is represented by $\alpha^n$ for each $n\in\mathbb{N}$, and it is hoped that $(N,\alpha)$ is minimal. To be accurate, the notion of minimal dilations is introduced in \cite{arve3} as the following.
\begin{defi}\label{1.1.}
Let $M$ be a von Neumann algebra and $T$ a normal UCP-map on $M$. A triplet $(N,\alpha,p)$ of a von Neumann algebra $N\supset
M$, a $*$-endomorphism $\alpha$ on $N$ and a projection $p\in
N$ is called a dilation of $T$ if $M=pNp$ and $T^n(x)=p\alpha^n(x)p$ for all $x\in
M$ and $n\in\mathbb{Z}_{\geq0}$. Moreover, a dilation $(N,\alpha,p)$ of $T$ is called minimal if $N$ is generated by $\bigcup_{n=0}^\infty\alpha^n(M)$ and the central support $c(p)$ of $p$ coincides with $1_N$.
\end{defi}

Dilations for a $C^*$-algebra $A$ and those for a continuous semigroup $\{T_t\}_{t\geq0}$ consisting of CP-maps on $A$ are also defined in a similar way. It is known that a minimal dilation is unique if it exists, see \cite{arve3}. That is, for a minimal dilation $(N,\alpha,p)$ of a normal UCP-map $T:M\to
M$, an operator $p\alpha^{n_1}(x_1)\cdots\alpha^{n_k}(x_k)p$ is uniquely determined by $T$, $n_1,\cdots,n_k\in\mathbb{Z}_{\geq0}$ and $x_1,\cdots,x_k\in
N$ for each $k\in\mathbb{N}$. Then the question of the existence of the minimal dilation arises. Bhat\cite{bhat1.5} proved the existence of the minimal dilation in the case when $A=\mathcal{B}(\mathcal{H})$ which consists of all bounded operators on a Hilbert space $\mathcal{H}$, and each $T_t$ is unital. In \cite{bhat2}, he generalized a way of the construction in stages and constructed a minimal dilation on a $C^*$-algebra $A$ under the assumption that $A$ is unital and $\|T_t(1_A)\|\leq1$ holds for all $t\geq0$. These are called the minimal dilation theory for $C^*$-algebras.

After that, Bhat-Skeide\cite{bhat-skei} constructed the minimal dilation on a von Neumann algebra $N\supset
A$ in the case when $A$ is a von Neumann algebra and a semigroup $\{T_t\}_{t\geq0}$ of normal CP-maps on $A$ has a continuity with respect to $t\geq0$, by using inductive limits of the tensor products of Hilbert bimodules. On the other hand, Arveson\cite{arve0.25},\cite{arve0.5} introduced the product systems and gave a one-to-one correspondence between product systems and semigroups $\{\alpha_t\}_{t\geq0}$ of $*$-endomorphisms called the $E_0$-semigroups. Consequently, he classified product systems. But after that, it is understood that Arveson's theory contains the dilation theory substantially, and his idea affected the constructions of dilations. Muhly-Solel\cite{muhl-sole} proved the result in \cite{bhat-skei} for normal UCP-maps $\{T_t\}_{t\geq0}$ by the similar way as in \cite{bhat-skei}. But the constructions are different in its appearance and no direct relation has not been established yet.

In this paper, we overview the constructions in \cite{bhat-skei} and \cite{muhl-sole}, of the minimal dilation in the case when given semigroup is a discrete semigroup $\{T^n\}_{n=0}^\infty$ generated by a normal UCP-map. We shall make their direct relationship clear and reveal that these constructions are essentially the same. The dilation of a discrete semigroup is applicable to the theory of non-commutative Poisson boundaries as revealed in \cite{izum}. 

In what follows, we assume that all Hilbert spaces are separable, and $\mathcal{B}(\mathcal{H},\mathcal{K})$ means the set of all bounded operators from $\mathcal{H}$ to $\mathcal{K}$. If $\mathcal{K}=\mathcal{H}$, we denote $\mathcal{B}(\mathcal{H},\mathcal{K})$ by $\mathcal{B}(\mathcal{H})$. For a set $X$, the identity map on $X$ is denoted by $\mbox{{\rm
id}}_X$ and $F^0=\mbox{{\rm
id}}_X$ for every map $F:X\to
X$. The unit of a unital algebra $A$ is denoted by $1_A$.

The author is deeply grateful to Prof. Shigeru Yamagami for insightful comments and suggestions.

\section{Preliminaries}
We recall the notion of $W^*$-modules and the related notations about them.
\begin{defi}
\begin{enumerate}[{\rm(1)}]
\item
For von Neumann algebras $N$ and $M$, a Hilbert space $\mathcal{H}$ with normal $*$-representations of $N$ and the opposite von Neumann algebra $M^\circ$ of $M$ is a $W^*$-$N$-$M$-bimodule if their representations commute. When $N=\mathbb{C}$ or $M=\mathbb{C}$, we call $\mathcal{H}$ a right $W^*$-$M$-module or a left $W^*$-$N$-module, respectively. We write a $W^*$-$N$-$M$-bimodule, a right $W^*$-$M$-module and a left $W^*$-$N$-module by ${}_N\mathcal{H}_M,\ \mathcal{H}_M$ and ${}_N\mathcal{H}$, respectively.
\item
Let $N$ be a von Neumann algebra, $X_N$ and $Y_N$ right $W^*$-$N$-modules, and ${}_NZ$ and ${}_NW$ left $W^*$-$N$-modules. $\mbox{\rm{Hom}}(X_N,Y_N)$ and $\mbox{\rm{Hom}}({}_NZ,{}_NW)$ are the sets of all right and left $N$-linear bounded maps, respectively. If $X=Y$ and $Z=W$, they are denoted by $\mbox{\rm{End}}(X_N)$ and $\mbox{\rm{End}}({}_NZ)$, respectively.
\item
We denote the standard representation space of a von Neumann algebra $M$ in {\rm\cite{haag1}} by $L^2(M)$.
\end{enumerate}
\end{defi}
We introduce the notion of Hilbert modules which are tools to construct the minimal dilation by the ways by Bhat-Skeide and Muhly-Solel. It is a module over a von Neumann algebra $M$ with an $M$-valued inner product.
\begin{defi}\label{2.1.}
Let $M$ be a von Neumann algebra and $E$ be right $M$-module. If a map $(\cdot,\cdot):E\times
E\to
M$ is defined and satisfies the following properties, then $E$ is called a Hilbert $M$-module.
\begin{enumerate}[{\rm(1)}]
\item
$(x,\alpha
y+\beta
z)=\alpha(x,y)+\beta(x,z)\hspace{5pt}(x,y,z\in
E,\ \alpha,\beta\in\mathbb{C}).$
\item
$(x,ya)=(x,y)a\hspace{5pt}(x,y\in
E,\ a\in
M).$
\item
$(x,y)^*=(y,x)\hspace{5pt}(x,y\in
E).$
\item
$(x,x)\geq0\hspace{5pt}(x\in
E).$
\item
For every $x\in
E$, $x=0$ if and only if $(x,x)=0$.
\item
$E$ is complete with respect to the norm defined by $\|x\|=\|(x,x)\|^\frac{1}{2}$.
\end{enumerate}

Suppose $E$ and $F$ are Hilbert $M$-modules. A right module homomorphism $b:E\to
F$ is called adjointable if there is a right module homomorphism $b^*:F\to
E$ called the adjoint of $b$ such that $(y,bx)=(b^*y,x)$ holds for every $x\in
E$ and $a\in
M$. We denote the set of all adjointable right module homomorphism by $\mathcal{B}^a(E,F)$. Automatically, $b\in\mathcal{B}^a(E,F)$ is bounded and $\mathcal{B}^a(E)=\mathcal{B}^a(E,E)$ is a $C^*$-algebra.

If there is a surjection $u\in\mathcal{B}^a(E,F)$ satisfying $(ux,uy)=(x,y)$ for every $x,y\in
E$, it is called an isomorphism or a unitary. Then $E$ and $F$ are said to be isomorphic and we write $E\cong
F$.
\end{defi}
\begin{defi}\label{2.2.}
Let $M$ and $N$ be von Neumann algebras and $E$ a Hilbert $N$-module. We call $E$ a Hilbert $M$-$N$-bimodule when it is an $M$-$N$-bimodule satisfying
\[
(x,ay)=(a^*x,y)
\]
for every $x,y\in
E$ and $a\in
M$.
\end{defi}
\begin{defi}\label{2.6.}
Let $M,N$ and $P$ be von Neumann algebras, $E$ a Hilbert $N$-$M$-bimodule and $F$ a Hilbert $M$-$P$-bimodule. Left and right actions of $a\in
M$ and $c\in
P$ on the algebraic tensor product $E\otimes_{{\rm
alg}}F$ are defined by $a(x\otimes
y)c=(ax)\otimes(yc)$ for each $x\in
E$ and $y\in
F$. We define that
\[
(x\otimes
y,x'\otimes
y')=(y,(x,x')y')
\]
for each $x,x'\in
E$ and $y,y'\in
F$, and put $\mathcal{N}=\{z\in
E\otimes_{\rm
alg}F\mid(z,z)=0\}$. The tensor product $E\otimes_MF$ of $E$ and $F$ is defined by the completion of $(E\otimes_{{\rm
alg}}F)/\mathcal{N}$ with respect to the norm induced from the above inner product. The left and right actions can be extended on $E\otimes_MF$, thus $E\otimes_MF$ becomes as Hilbert $N$-$P$-bimodules.
\end{defi}
The tensor product is associative, and for a Hilbert $M$-$M$-bimodule $E$, we regard as $\mathcal{B}^a(E)\subset\mathcal{B}^a(E)\otimes_M1_E\subset\mathcal{B}^a(E\otimes_ME).$

We introduce the GNS-construction with respect to a normal UCP-map, see \cite{skei} for example.
\begin{defi}
Suppose $M$ is a von Neumann algebra and $T:M\to
M$ is a normal UCP-map. We define a Hilbert $M$-$M$-bimodule $E(M,T)$ by the completion of $(M\otimes_{{\rm
alg}}M)/\mathcal{N}$ with respect to a norm induced from an inner product 
\[
(a\otimes
b,a'\otimes
b')_T=b^*T(a^*a')b'\hspace{20pt}(a,a',b,b'\in
M),
\]
where $\mathcal{N}=\{z\in
M\otimes_{\rm
alg}M\mid(z,z)_T=0\}$. If we put $\xi=1_M\otimes1_M+\mathcal{N}$, then ${\rm
span}(M\xi
M)$ is dense in $E(M,T)$ and $T(a)=(\xi,a\xi)$ holds for all $a\in
M$. We call the couple $(E(M,T),\xi)$ the GNS-representation with respect to $T$.
\end{defi}
There is an important identification in Bhat-Skeide's construction as the following.
\begin{defi}\label{2.7.}
Let $M$ be a von Neumann algebra acting on a Hilbert space $\mathcal{H}$ and $E$ a Hilbert $M$-module. Then $\mathcal{H}$ and $E$ are a Hilbert $M$-$\mathbb{C}$-bimodule and a Hilbert $\mathbb{C}$-$M$-bimodule, respectively, and hence we can define the tensor product $E\otimes_M\mathcal{H}$ as Hilbert bimodules. For $\xi\in
E$, we define $L_\xi:\mathcal{H}\ni
h\mapsto\xi\otimes
h\in
E\otimes_M\mathcal{H}$. Then we can identify $E$ as a right $M$-submodule of $\mathcal{B}(\mathcal{H},E\otimes_M\mathcal{H})$ by a map $:E\ni\xi\mapsto
L_\xi\in\mathcal{B}(\mathcal{H},E\otimes_M\mathcal{H})$. For $b\in\mathcal{B}^a(E)$, we can identify that $\mathcal{B}^a(E)\subset\mathcal{B}(E\otimes_M\mathcal{H})$ by
\[
b(\xi\otimes
h)=(b\xi)h\in
E\otimes_M\mathcal{H}\hspace{20pt}(\xi\in
E,\ h\in\mathcal{H}).
\]
If $E\subset\mathcal{B}(\mathcal{H},E\otimes_M\mathcal{H})$ is closed with respect to the strong operator topology, $E$ is called a von Neumann $M$-module.

Suppose $N$ is a von Neumann algebra. A von Neumann $M$-module $E$ is called a von Neumann $N$-$M$-bimodule if $E$ is a Hilbert $N$-$M$ bimodule, and a map $\rho:N\to\mathcal{B}(E\otimes_M\mathcal{H})$ defined by
\[
\rho(x)(\xi\otimes
h)=x\xi\otimes
h\hspace{20pt}(\xi\in
E,\ h\in\mathcal{H})
\]
is normal.

Then $\mathcal{B}^a(E)\subset\mathcal{B}(E\otimes_M\mathcal{H})$ is a von Neumann subalgebra; see \cite{skei}.
\end{defi}
A tensor product defined below is used in Muhly-Solele's construction.
\begin{defi}\label{4.1.}
Let $M$ be a von Neumann algebra acting on a Hilbert space $\mathcal{H}$ and $T$ a normal UCP-map on $M$. We define a sesquilinear form on the algebraic tensor product $M\otimes_{{\rm
alg}}\mathcal{H}$ by
\[
(x\otimes\xi,y\otimes\eta)=(\xi,T(x^*y)\eta)\hspace{20pt}(x,y\in
M,\ \xi,\eta\in\mathcal{H}).
\]
We define the Hilbert space $M\otimes_T\mathcal{H}=\overline{(M\otimes_{\rm
alg}\mathcal{H})/N}$, where $N=\{z\in
M\otimes_{{\rm
alg}}\mathcal{H}\mid(z,z)=0\}$.

A representation $\pi_T$ of $M$ on $M\otimes_T\mathcal{H}$ is defined by
\[
\pi_T(y)(x\otimes\xi)=yx\otimes\xi\hspace{20pt}(x\in
M,\ \xi\in\mathcal{H}).
\]
\end{defi}

\section{Some isomorphisms between $W^*$-bimodules}
In this section, some new results on isomorphisms between $W^*$-bimodules are presented as Proposition \ref{6.2.}--Corollary \ref{6.5.}. In Subsection 4.4, they will be used to see a relation between two constructions of the minimal dilation, which are given by Bhat-Skeide and Muhly-Solel.

First, we introduce notations with respect to $W^*$-modules and the relative tensor products in \cite{conn} and \cite{sauv}, and recall the facts about them (cf. \cite{take2} and \cite{bail-deni-have}). 
\begin{fact}\label{a}
\begin{enumerate}[{\rm(1)}]
\item
Let $M$ be a von Neumann algebra and ${}_M\mathcal{H}$ a $W^*$-$M$-module. For each positive normal functional $\phi$ on $M$, let $(\pi_\phi,\mathcal{H}_\phi,\xi_\phi)$ be the GNS-representation of $M$ with respect to $\phi$. We denote $\pi_\phi(x)\xi_\phi$ by $x\phi^\frac{1}{2}$ for each $x\in
M$. Since $\mathcal{H}$ is decomposable into cyclic representations, there exists a family of vectors $\{\xi_i\}_{i\in
I}$ in $\mathcal{H}$ such that $\mathcal{H}=\bigoplus_{i\in
I}\mathcal{H}_{\omega_i}$ where $\omega_i(x)=(\xi_i,x\xi_i)$. Moreover, if we denote the support of $\omega_i$ by $q_i$, we have 
\[
\mathcal{H}\cong\bigoplus_{i\in
I}(L^2(M)q_i)\cong(\bigoplus_{i\in
I}L^2(M))q
\]
as $W^*$-$M$-module where $q$ is the diagonal matrix whose diagonal entries are $\{q_i\}_{i\in
I}$.
\item
For a $W^*$-$M$-$N$-bimodule ${}_M\mathcal{H}_N$, we denote the dual Hilbert space of $\mathcal{H}$ by $\mathcal{H}^*$. For every $\xi^*\in
\mathcal{H}^*$, the right action of $x\in
M$ and the left action of $y\in
N$ to $\xi^*$ are defined by
\[
y\xi^*x=(x^*\xi
y^*)^*\in
\mathcal{H}^*.
\]
Then $\mathcal{H}^*$ becomes an $N$-$M$-bimodule.
\item
For each right $W^*$-$M$-module $\mathcal{H}_M$ and left $W^*$-$M$ module ${}_M\mathcal{K}$, we denote the relative tensor product of $\mathcal{H}$ and $\mathcal{K}$ with respect to $M$ by $\mathcal{H}\otimes^M\mathcal{K}$. The relative tensor product is associative. For a faithful semi-finite normal weight $\phi$, the subspace of sums of the form $\xi\phi^{-\frac{1}{2}}\eta$'s is dense in $\mathcal{H}\otimes^M\mathcal{K}$. Here, the notation $\xi\phi^{-\frac{1}{2}}\eta$ means that the tensor product of $\xi\in\mathcal{H}$ and a $\phi$-bounded vector $\eta\in\mathcal{K}$ For details, see {\rm\cite[Chapter 5, Appendix B]{conn}}. The relative tensor products have the following property for $W^*$-bimodule ${}_N\mathcal{H}_M$ and ${}_M\mathcal{K}_P$.
\begin{eqnarray*}
&&\mathcal{H}\otimes^ML^2(M)\cong\mathcal{H},\ L^2(M)\otimes^M\mathcal{K}\cong\mathcal{K},\\
&&\mathcal{K}\otimes^{(M')^\circ}\mathcal{K}^*\cong
L^2(M),\ \mathcal{K}^*\otimes^{M}\mathcal{K}\cong
L^2(M')
\end{eqnarray*}
where these isomorphisms mean as $W^*$-bimodules.
\item
We fix a von Neumann algebra $M$. Let $X_M$ be a Hilbert $M$-module and $\mathcal{H}_M$ be a right $W^*$-$M$-module. We can define the right $W^*$-module $\mathcal{H}(X)_M$ and the Hilbert $M$-module $X(\mathcal{H})_M$ as the following.
\begin{eqnarray*}
&&\mathcal{H}(X)_M=(X\otimes_ML^2(M))_M,\\
&&(x\otimes\xi,y\otimes\eta)_{\mathcal{H}(X)}=(\xi,(x,y)\eta)\hspace{20pt}(x\otimes\xi,y\otimes\eta\in\mathcal{H}(X)),\\
&&X(\mathcal{H})={\rm
Hom}(L^2(M)_M,\mathcal{H}_M)_M,\\
&&(x,y)_{X(\mathcal{H})}=x^*y\in{\rm
End}(L^2(M)_M)=M\hspace{20pt}(x,y\in
X(\mathcal{H})).
\end{eqnarray*}
This gives a one-to-one correspondence between Hilbert $M$-modules and right $W^*$-$M$-modules.
\end{enumerate}
\end{fact}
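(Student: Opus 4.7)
For part (1), I would first produce the decomposition $\mathcal{H}\cong\bigoplus_{i\in I}\mathcal{H}_{\omega_i}$ by a Zorn-style argument: take a maximal family $\{\xi_i\}_{i\in I}\subset\mathcal{H}$ whose cyclic subspaces $\overline{M\xi_i}$ are mutually orthogonal; maximality together with normality of the representation force the sum to exhaust $\mathcal{H}$, and the map $x\xi_i\mapsto x\omega_i^{1/2}$ is an $M$-linear isometry onto $\mathcal{H}_{\omega_i}$. The identification $\mathcal{H}_{\omega_i}\cong L^2(M)q_i$ then follows from Haagerup's standard form: since $\omega_i$ has support $q_i$, its GNS space is canonically $L^2(M)q_i$. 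Assembling the summands into a diagonal matrix over $\bigoplus L^2(M)$ gives the stated $(\bigoplus L^2(M))q$ picture. For part (2), the formula $y\xi^*x:=(x^*\xi y^*)^*$ is well defined because $\xi\mapsto\xi^*$ is antilinear; the bimodule axioms on $\mathcal{H}$ translate directly to the corresponding axioms on $\mathcal{H}^*$ by conjugation, and normality of the two representations is inherited from the original ones.

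For part (3), I would appeal to Sauvageot's construction: fix a faithful semi-finite normal weight $\phi$ on $M$, introduce the space of $\phi$-bounded vectors $D(\mathcal{K},\phi)$, and define the inner product on simple tensors through the bounded operators $L^\phi_\eta$ determined by $L^\phi_\eta\Lambda_\phi(a)=a\eta$. Associativity and the unit isomorphisms $\mathcal{H}\otimes^M L^2(M)\cong\mathcal{H}$ and $L^2(M)\otimes^M\mathcal{K}\cong\mathcal{K}$ then follow once one decomposes $\mathcal{H}$ and $\mathcal{K}$ as in (1) and compares with the standard left/right $M$-action on $L^2(M)$. The duality isomorphisms $\mathcal{K}\otimes^{(M')^\circ}\mathcal{K}^*\cong L^2(M)$ and $\mathcal{K}^*\otimes^M\mathcal{K}\cong L^2(M')$ are consequences of Tomita--Takesaki theory, with the modular conjugation implementing the antilinear flip $\xi\mapsto\xi^*$ and converting the left action on $\mathcal{K}$ into the right action on $\mathcal{K}^*$.

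For part (4), I would show that the assignments $X_M\mapsto X\otimes_M L^2(M)$ and $\mathcal{H}_M\mapsto\mathrm{Hom}(L^2(M)_M,\mathcal{H}_M)$ are mutually quasi-inverse. The sesquilinear formulas in the statement make $\mathcal{H}(X)$ into a right $W^*$-$M$-module and $X(\mathcal{H})$ into a Hilbert $M$-module by direct verification, and composition in one direction is immediate from $L^2(M)\otimes^M L^2(M)\cong L^2(M)$ together with part (3). The main obstacle, and where the bulk of the work lies, is the opposite round-trip: one must check that the canonical evaluation $\mathrm{Hom}(L^2(M)_M,\mathcal{H}_M)\otimes_M L^2(M)\to\mathcal{H}$ is an isomorphism of right $W^*$-$M$-modules. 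I would reduce this to the case $\mathcal{H}=L^2(M)q$ via the decomposition of part (1), where both sides are identified with $L^2(M)q$ by inspection, and then extend by direct sums and norm completion to the general case.
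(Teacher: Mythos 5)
The paper does not actually prove this statement: it is labelled a \emph{Fact} and is explicitly recalled from the literature (Connes, Sauvageot, Takesaki, Baillet--Denizeau--Havet), so there is no internal proof to compare yours against. Your sketch follows the standard arguments from precisely those sources: a Zorn-type maximal orthogonal family of cyclic subspaces plus the standard form for (1), direct verification for (2), Sauvageot's construction via $\phi$-bounded vectors and the operators $L^\phi_\eta$ together with Tomita--Takesaki duality for (3), and the module/representation correspondence for (4). In that sense it is the ``same approach'' as the one the paper's citations supply, and at the level of a sketch it is sound.

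One caveat on (4): you have the difficulty located on the wrong round trip. The composite $\mathrm{Hom}(L^2(M)_M,\mathcal{H}_M)\otimes_M L^2(M)\to\mathcal{H}$, which you call the main obstacle, is indeed handled exactly as you say, by reducing to $\mathcal{H}\cong\bigoplus_i L^2(M)q_i$ via part (1). But the composite you call immediate, namely the canonical isometry $X\to\mathrm{Hom}(L^2(M)_M,(X\otimes_M L^2(M))_M)$, is the one that genuinely fails to be surjective for a general Hilbert $M$-module: for $M=\mathcal{B}(\ell^2)$ and $X=\mathcal{K}(\ell^2)$ with $(x,y)=x^*y$, the round trip returns $\mathcal{B}(\ell^2)$, not $X$. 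The correspondence is bijective only on self-dual (von Neumann) $M$-modules --- which is how it is used later in the paper, where the modules in play are strong-operator closed --- so your argument should either restrict to that class or pass to the self-dual completion; as written, the direction you dismiss as immediate is the one carrying the actual content.
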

From now on, we fix a von Neumann algebra $M$ acting on a Hilbert space $\mathcal{H}$ and a normal UCP-map $T$ on $M$. We see relations between the relative tensor product $\otimes^M$ and the tensor product $\otimes_T$ defined in Section 1.
\begin{defi}
Since $M$ acts on the standard space $L^2(M)$ of $M$, we can define a left $W^*$-$M$-module $\mathcal{H}(M,T)=M\otimes_TL^2(M)$ {\rm(}Definition \ref{4.1.}{\rm)}. We define a right action of $M$ on $\mathcal{H}(M,T)$ by $(x\otimes\xi)y=x\otimes\xi
y$ for each $x,y\in
M$ and $\xi\in
L^2(M)$. Then $\mathcal{H}(M,T)$ is a $W^*$-$M$-$M$-bimodule.
\end{defi}
\begin{prop}\label{6.2.}
An isomorphism $\mathcal{H}(M,T)\otimes^M\mathcal{H}(M,T)\cong
M\otimes_T(M\otimes_TL^2(M))$ holds as $W^*$-bimodules.
\end{prop}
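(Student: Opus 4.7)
I would identify both sides with the common expression $E(M,T)\otimes_M\bigl(E(M,T)\otimes_M L^2(M)\bigr)$, where $E(M,T)$ denotes the GNS Hilbert $M$-$M$-bimodule associated with $T$. The basic lemma is the natural identification
\[
M\otimes_T\mathcal{K}\;\cong\;E(M,T)\otimes_M\mathcal{K}
\]
valid for every Hilbert space $\mathcal{K}$ carrying a normal $\ast$-representation of $M$, realised on simple tensors by $(a\otimes b)\otimes\xi\mapsto a\otimes(b\xi)$. Isometry is a direct computation,
\[
\bigl((a\otimes b)\otimes\xi,(a'\otimes b')\otimes\eta\bigr)=(\xi,b^{\ast}T(a^{\ast}a')b'\eta)=(a\otimes b\xi,a'\otimes b'\eta)_{T},
\]
and surjectivity together with $M$-bimodularity are immediate.

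For the right-hand side, applying this lemma twice --- first with $\mathcal{K}=M\otimes_T L^2(M)$, then with $\mathcal{K}=L^2(M)$ --- rewrites $M\otimes_T(M\otimes_T L^2(M))$ as $E(M,T)\otimes_M\bigl(E(M,T)\otimes_M L^2(M)\bigr)$. For the left-hand side, the lemma with $\mathcal{K}=L^2(M)$ turns $\mathcal{H}(M,T)=M\otimes_T L^2(M)$ into $E(M,T)\otimes_M L^2(M)$, so the source becomes $\bigl(E(M,T)\otimes_M L^2(M)\bigr)\otimes^M\bigl(E(M,T)\otimes_M L^2(M)\bigr)$. I would then invoke the monoidality of the correspondence of Fact~\ref{a}(4): for any Hilbert $M$-module $X$ and any Hilbert space $\mathcal{K}$ with a normal $M$-representation,
\[
\bigl(X\otimes_M L^2(M)\bigr)\otimes^M\mathcal{K}\;\cong\;X\otimes_M\mathcal{K},
\]
which follows from associativity of the relative tensor product together with the absorption $L^2(M)\otimes^M\mathcal{K}\cong\mathcal{K}$ of Fact~\ref{a}(3). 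Taking $X=E(M,T)$ and $\mathcal{K}=E(M,T)\otimes_M L^2(M)$ reduces the source to the same common expression as the target.

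Composing the identifications yields the required $W^*$-bimodule unitary, whose action on a simple tensor $(x\otimes\xi)\otimes^M(y\otimes\eta)$ is $x\otimes u\bigl(\xi\otimes^M(y\otimes\eta)\bigr)$, with $u\colon L^2(M)\otimes^M\mathcal{H}(M,T)\to\mathcal{H}(M,T)$ the canonical absorption isomorphism. The main technical obstacle is the monoidality equation displayed above: while morally just ``associativity of tensor products'', it bridges the Hilbert-module tensor of Definition~\ref{2.6.} with the Connes--Sauvageot tensor of Fact~\ref{a}(3), so to make the argument rigorous one must unfold both sides via $\phi$-bounded vectors for a faithful normal semifinite weight $\phi$ and check that the two inner products coincide. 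Once this is done, intertwinement with the left and right $M$-actions is straightforward.
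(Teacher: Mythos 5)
Your argument is correct, and the unitary you end up with is the same one the paper writes down, but you reach it by a genuinely different and more modular route. The paper's proof simply exhibits the map on a dense set of relative-tensor vectors, sending $(x\otimes_Ty\phi^{\frac{1}{2}})\phi^{-\frac{1}{2}}(z\otimes_T\phi^{\frac{1}{2}}w)$ to $x\otimes_T((yz)\otimes_T(\phi^{\frac{1}{2}}w))$ for a faithful normal state $\phi$, and asserts in one line that this is a $W^*$-bimodule isomorphism; the isometry and well-definedness checks are left implicit. You instead factor everything through the GNS bimodule $E(M,T)$ via two reusable lemmas: the identification $M\otimes_T\mathcal{K}\cong E(M,T)\otimes_M\mathcal{K}$ (which you verify by the same inner-product computation the paper later performs in Subsection 4.4 to get $\overline{E_1}^s\otimes_ML^2(M)\cong M\otimes_TL^2(M)$), and the compatibility $(X\otimes_ML^2(M))\otimes^M\mathcal{K}\cong X\otimes_M\mathcal{K}$ between the Hilbert-module tensor product of Definition \ref{2.6.} and the Connes--Sauvageot relative tensor product. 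You are right to flag the second of these as the real technical content --- it is not literally an instance of associativity of $\otimes^M$, since $X\otimes_ML^2(M)$ only becomes a relative tensor product after invoking the correspondence of Fact \ref{a}(4), so the $\phi$-bounded-vector verification you describe is genuinely needed; but that computation is exactly the one hidden in the paper's one-line assertion, so the two proofs carry the same computational burden. What your organization buys is uniformity: the same two lemmas immediately give Proposition \ref{6.3.}, the $n$-fold iterates, and the identifications of Subsection 4.4, whereas the paper repeats an ad hoc formula each time. If you unwind your composite on $(x\otimes y\phi^{\frac{1}{2}})\phi^{-\frac{1}{2}}(z\otimes\phi^{\frac{1}{2}}w)$ you recover the paper's formula exactly, which is a worthwhile sanity check to record.
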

\begin{proof}
Let $\phi$ be a faithful normal state on $M$. We define a correspondence from an each vector
\begin{eqnarray*}
(x\otimes_Ty\phi^{\frac{1}{2}})\phi^{-\frac{1}{2}}(z\otimes_T\phi^{\frac{1}{2}}w)&\in&(M\otimes_TL^2(M))\otimes^\phi(M\otimes_TL^2(M))\\
&\cong&(M\otimes_TL^2(M))\otimes^M(M\otimes_TL^2(M))\\
&=&\mathcal{H}(M,T)\otimes^M\mathcal{H}(M,T)
\end{eqnarray*}
to a vector
\[
x\otimes_T((yz)\otimes_T(\phi^{\frac{1}{2}}w))\in
M\otimes_T(M\otimes_TL^2(M)).
\]
Then this correspondence gives a $W^*$-bimodule isomorphism.
\end{proof}
\begin{prop}\label{6.3.}
An isomorphism $\mathcal{H}(M,T)\otimes^M\mathcal{H}\cong
M\otimes_T\mathcal{H}$ holds as $W^*$-modules.
\end{prop}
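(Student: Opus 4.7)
The plan is to define an explicit map on a dense subspace, in parallel with the proof of Proposition~\ref{6.2.}. Fix a faithful normal state $\phi$ on $M$. On the dense subspace of $\mathcal{H}(M,T)\otimes^M\mathcal{H}$ spanned by elementary tensors $(x\otimes_T y\phi^{\frac{1}{2}})\phi^{-\frac{1}{2}}h$ with $x,y\in M$ and $h\in\mathcal{H}$, I would set
\[
U\bigl((x\otimes_T y\phi^{\frac{1}{2}})\phi^{-\frac{1}{2}}h\bigr)\;=\;x\otimes_T(yh)\;\in\;M\otimes_T\mathcal{H}.
\]
The heuristic is that the right $M$-action on $\mathcal{H}(M,T)=M\otimes_TL^2(M)$ lives on the $L^2(M)$-factor, so absorbing $y$ through $\phi^{-\frac{1}{2}}$ transports it to $\mathcal{H}$ as a left action.

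The main step is the inner-product verification. Using Definition~\ref{4.1.} for the target,
\[
(x\otimes_T yh,\; x'\otimes_T y'h')_{M\otimes_T\mathcal{H}} \;=\; (yh,\,T(x^*x')y'h')_{\mathcal{H}} \;=\; (h,\,y^*T(x^*x')y'h')_{\mathcal{H}}.
\]
For the source, the Connes--Sauvageot inner product on $\mathcal{H}(M,T)\otimes^M\mathcal{H}$ of two such elementary tensors produces the same value once one identifies the $M$-valued right pairing on $\mathcal{H}(M,T)$ as
\[
\langle x\otimes_T y\phi^{\frac{1}{2}},\; x'\otimes_T y'\phi^{\frac{1}{2}}\rangle_{M}\;=\;y^*\,T(x^*x')\,y',
\]
which one checks by verifying that $x\otimes_T y\phi^{\frac{1}{2}}$ is $\phi$-bounded for the right $M$-action on $\mathcal{H}(M,T)$ and computing the associated bounded operator $L^2(M)\to\mathcal{H}(M,T)$. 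Given this, $U$ extends to a well-defined isometry.

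Surjectivity is then immediate, since the elements $x\otimes_T(yh)$ already span a dense subspace of $M\otimes_T\mathcal{H}$; left $M$-linearity is clear from the formula, as left multiplication by $a\in M$ acts on $x$ on both sides. The main obstacle is the inner-product bookkeeping: correctly pinning down the $M$-valued pairing on $\mathcal{H}(M,T)$ and tracking how the $\phi^{\frac{1}{2}}$-normalizations interact with the $T$-twisted inner product. Once this identity is in place, the rest of the argument is a routine unwinding of the definitions.
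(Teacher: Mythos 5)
Your proposal is correct and takes essentially the same route as the paper: the paper defines the very same map $(x\otimes_T y\phi^{\frac{1}{2}})\phi^{-\frac{1}{2}}h\mapsto x\otimes_T(yh)$ (writing $h=\bigoplus_i\xi_i$ via the cyclic decomposition of ${}_M\mathcal{H}$ from Fact \ref{a}~(1), so that $\bigoplus_i y\xi_i$ is exactly $yh$) and simply asserts it is a unitary. Your identification of the $M$-valued pairing $\langle x\otimes_T y\phi^{\frac{1}{2}},x'\otimes_T y'\phi^{\frac{1}{2}}\rangle_M=y^*T(x^*x')y'$ just makes explicit the inner-product verification the paper leaves implicit.
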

\begin{proof}
Let $\phi$ be a faithful normal state on $M$. By Fact \ref{a} $(1)$ with respect to the decomposition of $\mathcal{H}$, each vector $\xi\in\mathcal{H}$ can be represented as $\bigoplus_{i\in
I}\xi_i$ for some $\xi_i\in
L^2(M)p_i$ and the projection $p_i$. We define a correspondence which maps
\[
(x\otimes_Ty\phi^\frac{1}{2})\phi^{-\frac{1}{2}}\bigoplus_{i\in
I}\xi_i\in(M\otimes_TL^2(M))\otimes^M\mathcal{H}
\]
to $x\otimes_T(\bigoplus_{i\in
I}y\xi_i)\in
M\otimes_T\mathcal{H}$. This correspondence is a unitary.
\end{proof}
Now, we have 
\begin{eqnarray*}
&&\mathcal{H}(M,T)\otimes^M\mathcal{H}(M,T)\otimes^M\mathcal{H}(M,T)\\
&&=(M\otimes_TL^2(M))\otimes^M(M\otimes_TL^2(M))\otimes^M(M\otimes_TL^2(M))\\
&&\cong(M\otimes_TL^2(M))\otimes^M(M\otimes_T(M\otimes_TL^2(M)))\\
&&\cong(M\otimes_T(M\otimes_T(M\otimes_TL^2(M)))).
\end{eqnarray*}
Indeed the first isomorphism is implied from Proposition \ref{6.2.} and the third isomorphism is given by a unitary defined by
\[
(x_1\otimes_Tx_2\phi^{\frac{1}{2}})\phi^{-\frac{1}{2}}(x_3\otimes_T(x_4\otimes_T\phi^{\frac{1}{2}}x_5))\mapsto
x_1\otimes_T((x_2x_3)\otimes_T(x_4\otimes_T\phi^{\frac{1}{2}}x_5))
\]
for each $x_1,x_2,x_3,x_4,x_5\in
M$ similarly to the proof of Proposition \ref{6.3.}. In the same way, we have
\[
\underbrace{\mathcal{H}(M,T)\otimes^M\cdots\otimes^M\mathcal{H}(M,T)}_{\mbox{{\rm
$n$} times}}\cong{}_M\underbrace{(M\otimes_T(\cdots\otimes_T(M}_{\mbox{{\rm
$n$ times}}}\otimes_TL^2(M))\cdots)).
\]
We define a $W^*$-$M$-$M$-bimodule
\[
{}_M\mathcal{H}_n(M,T)_M={}_M\underbrace{\mathcal{H}(M,T)\otimes^M\cdots\otimes^M\mathcal{H}(M,T)}_{\mbox{{\rm
$n$} times}}{}_M
\]
and a $W^*$-$(M')^\circ$-$(M')^\circ$-bimodule
\[
{}_{(M')^\circ}\mathcal{H}_n'(M,T)_{(M')^\circ}
={}_{(M')^\circ}\mathcal{H}^*\otimes^M\mathcal{H}_n(M,T)\otimes^{M}\mathcal{H}_{(M')^\circ}
\]
for each $n\in\mathbb{N}$.
\begin{prop}
We have an isomorphism
\begin{eqnarray*}
\mathcal{H}_n'(M,T)\cong\underbrace{\mathcal{H}_1'(M,T)\otimes^{(M')^\circ}\mathcal{H}_1'(M,T)}_{\mbox{$n$ times}}
\end{eqnarray*}
as $W^*$-bimodules for all $n\in\mathbb{N}$.
\end{prop}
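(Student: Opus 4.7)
The natural plan is induction on $n$, using the two standard identities from Fact \ref{a}(3): $\mathcal{H}\otimes^{(M')^\circ}\mathcal{H}^*\cong L^2(M)$ as $W^*$-$M$-$M$-bimodules, and $\mathcal{H}(M,T)\otimes^M L^2(M)\cong\mathcal{H}(M,T)$ (together with its left-sided analogue $L^2(M)\otimes^M\mathcal{H}_n(M,T)\cong\mathcal{H}_n(M,T)$). The base case $n=1$ is the definition of $\mathcal{H}_1'(M,T)$, so the whole content lies in the inductive step.

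For the inductive step I would unfold, using associativity of the relative tensor product,
\[
\mathcal{H}_1'(M,T)\otimes^{(M')^\circ}\mathcal{H}_n'(M,T)
=\mathcal{H}^*\otimes^M\mathcal{H}(M,T)\otimes^M\mathcal{H}\otimes^{(M')^\circ}\mathcal{H}^*\otimes^M\mathcal{H}_n(M,T)\otimes^M\mathcal{H},
\]
then apply the collapse $\mathcal{H}\otimes^{(M')^\circ}\mathcal{H}^*\cong L^2(M)$ to the central factor to obtain
\[
\mathcal{H}^*\otimes^M\mathcal{H}(M,T)\otimes^M L^2(M)\otimes^M\mathcal{H}_n(M,T)\otimes^M\mathcal{H},
\]
and finally absorb the $L^2(M)$ via Fact \ref{a}(3) to reach
\[
\mathcal{H}^*\otimes^M\bigl(\mathcal{H}(M,T)\otimes^M\mathcal{H}_n(M,T)\bigr)\otimes^M\mathcal{H}
=\mathcal{H}^*\otimes^M\mathcal{H}_{n+1}(M,T)\otimes^M\mathcal{H}
=\mathcal{H}_{n+1}'(M,T).
\]
Composing with the inductive hypothesis $\mathcal{H}_n'(M,T)\cong\mathcal{H}_1'(M,T)^{\otimes^{(M')^\circ}n}$ then yields the desired isomorphism.

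The principal thing to check — and the only real obstacle — is that each of the three isomorphisms deployed respects the full bimodule structure required by the next step. Concretely, the middle collapse $\mathcal{H}\otimes^{(M')^\circ}\mathcal{H}^*\cong L^2(M)$ must be treated as a $W^*$-$M$-$M$-bimodule isomorphism so that $\otimes^M$-associativity legitimately fuses the surrounding $\mathcal{H}(M,T)$ and $\mathcal{H}_n(M,T)$ into $\mathcal{H}_{n+1}(M,T)$; meanwhile the outermost $\mathcal{H}^*$ and $\mathcal{H}$ carry the ambient left and right $(M')^\circ$-actions, respectively, and these are left untouched by the inner manipulations. On representative vectors, picking a faithful normal state $\phi$ as in the proofs of Propositions \ref{6.2.} and \ref{6.3.}, the isomorphism is given by telescoping contractions of the form $\zeta_i\otimes^{(M')^\circ}\xi_i^*\rightsquigarrow \phi^{-1/2}$-bounded elements of $L^2(M)$, which are then fed into the $M$-action on the next $\mathcal{H}(M,T)$ factor — the same calculational pattern already carried out in those two propositions, so no new subtlety arises beyond checking associativity and the bimodule actions.
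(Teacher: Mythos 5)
Your proposal is correct and matches the paper's own argument: the paper proves the case $n=2$ by exactly the two collapses you describe (the central $\mathcal{H}\otimes^{(M')^\circ}\mathcal{H}^*\cong L^2(M)$ followed by absorbing $L^2(M)$ via Fact 3.1(3)), leaving the general $n$ to the same iteration/induction you spell out. No substantive difference.
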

\begin{proof}
By Fact \ref{a} $(3)$, we have isomorphisms
\begin{eqnarray*}
&&\mathcal{H}_1'(M,T)\otimes^{(M')^\circ}\mathcal{H}_1'(M,T)_{M'}\\
&&=\mathcal{H}^*\otimes^M\mathcal{H}(M,T)\otimes^M\mathcal{H}\otimes^{(M')^\circ}\mathcal{H}^*\otimes^M\mathcal{H}(M,T)\otimes^M\mathcal{H}\\
&&\cong\mathcal{H}^*\otimes^M\mathcal{H}(M,T)\otimes^ML^2(M)\otimes^M\mathcal{H}(M,T)\otimes^M\mathcal{H}\\
&&\cong\mathcal{H}^*\otimes^M\mathcal{H}(M,T)\otimes^M\mathcal{H}(M,T)\otimes^M\mathcal{H}\\
&&=\mathcal{H}_2'(M,T)
\end{eqnarray*}
as $W^*$-$(M')^\circ$-$(M')^\circ$-bimodules.
\end{proof}
\begin{coro}\label{6.5.}
We have an isomorphism
\begin{eqnarray*}
\mathcal{H}_n(M,T)\otimes^M\mathcal{H}&\cong&\underbrace{M\otimes_T(M\otimes_T\cdots(M\otimes_T(M}_{\mbox{$n$ times}}\otimes_T\mathcal{H}))\cdots).
\end{eqnarray*}
as $W^*$-modules for all $n\in\mathbb{N}$.
\end{coro}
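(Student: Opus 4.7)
The plan is to proceed by induction on $n$, combining associativity of the relative tensor product with a mild strengthening of Proposition \ref{6.3.}. The base case $n=1$ is Proposition \ref{6.3.} verbatim. For the inductive step I would exploit the recursive decomposition $\mathcal{H}_{n+1}(M,T)=\mathcal{H}(M,T)\otimes^M\mathcal{H}_n(M,T)$ together with the associativity of $\otimes^M$ recorded in Fact \ref{a}(3).

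The key auxiliary observation is that Proposition \ref{6.3.} holds with the fixed Hilbert space $\mathcal{H}$ replaced by an arbitrary left $W^*$-$M$-module $\mathcal{K}$, giving
$$\mathcal{H}(M,T)\otimes^M\mathcal{K}\cong M\otimes_T\mathcal{K}.$$
The proof in Proposition \ref{6.3.} only uses the decomposition of $\mathcal{H}$ into cyclic summands $L^2(M)p_i$ provided by Fact \ref{a}(1), a property of every left $W^*$-$M$-module; the same explicit map
$(x\otimes_T y\phi^{1/2})\phi^{-1/2}\bigoplus_i\xi_i\mapsto x\otimes_T\bigoplus_i y\xi_i$
therefore transfers to this more general setting without any modification.

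With this in hand, the inductive step is immediate. Assuming the corollary for $n$, set $\mathcal{K}_n=\mathcal{H}_n(M,T)\otimes^M\mathcal{H}$, which by the induction hypothesis is identified with the $n$-fold iteration $M\otimes_T(\cdots(M\otimes_T\mathcal{H})\cdots)$. Associativity of $\otimes^M$ gives
$$\mathcal{H}_{n+1}(M,T)\otimes^M\mathcal{H}\cong\mathcal{H}(M,T)\otimes^M\mathcal{K}_n,$$
and the strengthened form of Proposition \ref{6.3.} applied with $\mathcal{K}=\mathcal{K}_n$ identifies the right-hand side with $M\otimes_T\mathcal{K}_n$, which is precisely the $(n+1)$-fold iteration required.

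The main thing to check is that each $\mathcal{K}_n$ is indeed a left $W^*$-$M$-module, so that the strengthened Proposition \ref{6.3.} genuinely applies. This reduces to verifying normality of the induced representation of $M$ on each iterated $M\otimes_T(\cdots)$, which follows by an easy induction from the normality of $\pi_T$ in Definition \ref{4.1.} applied at each layer. Once this normality is in place, the bimodule compatibility of the isomorphism at each stage is routine and parallels the explicit correspondence already spelled out in the proofs of Propositions \ref{6.2.} and \ref{6.3.}.
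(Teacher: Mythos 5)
Your proposal is correct and matches the paper's (largely implicit) derivation: the paper also obtains Corollary \ref{6.5.} by iterating the explicit unitary of Proposition \ref{6.3.} with the fixed Hilbert space replaced by an already-iterated module $M\otimes_T(\cdots)$, exactly the ``strengthened Proposition \ref{6.3.}'' you formulate, combined with associativity of $\otimes^M$. Your inductive packaging, including the check that each $\mathcal{K}_n$ is a left $W^*$-$M$-module so the generalized unitary applies, is a faithful and slightly more careful write-up of the same argument.
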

Now, we have the following isomorphisms
\begin{eqnarray*}
&&\mbox{{\rm
Hom}}({}_M\mathcal{H},{}_M(M\otimes_TL^2(M))\otimes^M\mathcal{H})\\
&&\cong\mbox{{\rm
Hom}}({}_{(M')^\circ}\mathcal{H}^*\otimes^M\mathcal{H},{}_{(M')^\circ}\mathcal{H}^*\otimes^M(M\otimes_TL^2(M))\otimes^M\mathcal{H})\\
&&\cong\mbox{{\rm
Hom}}({}_{(M')^\circ}L^2(M'),{}_{(M')^\circ}\mathcal{H}^*\otimes^M(M\otimes_TL^2(M))\otimes^M\mathcal{H})\hspace{20pt}(\because\mbox{Fact \ref{a} }(3)).
\end{eqnarray*}
Then $\mbox{{\rm
Hom}}({}_{(M')^\circ}L^2(M'),{}_{(M')^\circ}\mathcal{H}^*\otimes^M(M\otimes_TL^2(M))\otimes^M\mathcal{H})$ corresponds to $\mathcal{H}^*\otimes^M(M\otimes_TL^2(M))\otimes^M\mathcal{H}=\mathcal{H}_1'(M,T)$ by Fact \ref{a} $(4)$.

\section{Two constructions of the minimal dilation}

In this section, we describe two constructions of the minimal dilation by Bhat-Skeide\cite{bhat-skei} and Muhly-Solel\cite{muhl-sole}, and see a relation between these constructions. We fix a von Neumann algebra $M$ acting on a Hilbert space $\mathcal{H}$ and a normal UCP-map $T$ on $M$.
\subsection{Bhat-Skeide's construction}

Let $(E(M,T),\xi)$ be the GNS-representation with respect to $T$. We put
\begin{eqnarray*}
&&E_n=\underbrace{E(M,T)\otimes_M\cdots\otimes_ME(M,T)}_{\mbox{$n$ times}},\\
&&\xi_n=\underbrace{\xi\otimes\cdots\otimes\xi}_{\mbox{$n$ times}}
\end{eqnarray*}
Then $(E_n, \xi_n)$ is the GNS-representation with respect to $T^n$ for each $n\in\mathbb{N}$ by the uniqueness of the GNS-representation. Let $E$ be an the inductive limit of the inductive system $(\{E_n\}_{n=0}^\infty,\{\xi_{n-m}\otimes{\rm
id}_{E_n}\}_{n,m=0}^\infty)$. We define $\mathcal{K}_n=E_n\otimes_M\mathcal{H}$ for each $n\in\mathbb{N}$ and $\mathcal{K}=E\otimes_M\mathcal{H}$. By the identification in Definition \ref{2.7.} and \cite{skei}, each $\overline{E}_n^s\subset\mathcal{B}(\mathcal{H},\mathcal{K}_n)$ is a von Neumann $M$-$M$-bimodule and $\overline{E}^s\subset\mathcal{B}(\mathcal{H},\mathcal{K})$ is so, where $\overline{\hspace{3pt}\cdot\hspace{3pt}}^s$ means the closure with respect to the strong operator topology under the embeddings.

We define an endomorphism $\theta$ on $\mathcal{B}^a(\overline{E}^s)$ by
\[
\theta(b)=b\otimes\mbox{{\rm
id}}_{\overline{E_1}^s}\in\mathcal{B}^a(\overline{\overline{E}^s\otimes\overline{E}_1^s}^s)\cong\mathcal{B}^a(\overline{E}^s)\hspace{20pt}(b\in\mathcal{B}^a(\overline{E}^s)).
\]
For each $a\in
M$, we define $j_0(a)\in\mathcal{B}^a(\overline{E}^s)$ by
\[
j_0(a)(\eta)=\xi
a(\xi,\eta)\hspace{20pt}(\eta\in
E)
\]
and $j_n=\theta^n\circ
j_0\in\mathcal{B}^a(\overline{E}^s)$ for each $n\in\mathbb{N}$. Then we have 
\[
j_m(1_M)j_n(a)j_m(1_M)=j_m(T^{n-m}(a))\]
for all $n\geq
m$ and $a\in
M$. We can identify that $M=j_0(M)$. Let $N$ be a von Neumann algebra generated by $j_{\mathbb{Z}_{\geq0}}(M)$, $p$ be $j_0(1_M)$ and $\alpha$ be a restriction of $\theta$ to $N$. Then the conditions in Definition \ref{1.1.} are satisfied.
\subsection{Muhly-Solel's construction}

Put $E(0)=M'$. For each $n\in\mathbb{N}$, we define $\mathcal{H}_n=\underbrace{(M\otimes_T(\cdots\otimes_T(M}_{\mbox{$n$ times}}\otimes_T\mathcal{H})\cdots))$ and $E(n)=\mbox{{\rm
Hom}}({}_M\mathcal{H},{}_M\mathcal{H}_n)$. Each $E(n)$ admits an $M'$-valued inner product defined by
\[
(X,Y)=X^*Y\in
M'\hspace{20pt}(X,Y\in
E(n)),
\]
and we can define left and right actions of $M'$ on $E(n)$ by
\begin{eqnarray*}
&&(xX)\xi=(\underbrace{1_M\otimes\cdots\otimes1_M}_{\mbox{$n$ times}}\otimes
x)X\xi\hspace{20pt}(x\in
M',\ X\in
E(n),\ \xi\in\mathcal{H}),\\
&&(Xx)\xi=X(x\xi)\hspace{20pt}(x\in
M',\ X\in
E(n),\ \xi\in\mathcal{H}).
\end{eqnarray*}
Then $E(n)$ becomes a $W^*$-correspondence over $M'$ in the sense of \cite{muhl-sole}, and we identify $E(n)\otimes_{M'}E(m)$ with $E(n+m)$ by a map
\[
U_{n,m}:E(n)\otimes_{M'}E(m)\ni
X_n\otimes
X_m\mapsto(\underbrace{1_M\otimes\cdots\otimes1_M}_{\mbox{$m$ times}}\otimes
X_n)X_m\in
E(n+m)
\]
for each $n,m\in\mathbb{Z}_{\geq0}$.

Now, we put $P_0=\mbox{{\rm
id}}_{E(0)}$ and $\mathcal{L}_0=\mathcal{H}$, and for each $n\in\mathbb{N}$ define a map $P_n:E(n)\to\mathcal{B}(\mathcal{H})$ by $P_n(X)=i^*\circ
X$ for each $X\in
E(n)$. Let $\mathcal{L}_n$ be a Hilbert space which is given by the completion of $E(n)\otimes_{{\rm
alg}}\mathcal{H}$ with respect to an inner product defined by
\[
(X\otimes\xi,Y\otimes\eta)=(X\xi,Y\eta)\hspace{20pt}(X,Y\in
E(n),\ \xi,\eta\in\mathcal{H}).
\]
For each $0<m<n$, we define isometric operators $u_{n,m}$ by
\begin{eqnarray*}
&&u_{n,m}=(U_{m,n-m}\otimes1_{\mathcal{B}(\mathcal{H})})({\rm
id}_{E(m)}\otimes\tilde{P}_{n-m}^*):\mathcal{L}_m\rightarrow\mathcal{L}_n,\\
&&u_{n,0}=\tilde{P}_n^*:\mathcal{L}_0\rightarrow\mathcal{L}_n,\\
&&u_{n,n}=1_{\mathcal{B}(\mathcal{L}_n)}:\mathcal{L}_n\rightarrow\mathcal{L}_n,
\end{eqnarray*}
where for all $Q:E(n)\to\mathcal{B}(\mathcal{H})$, a map $\tilde{Q}:\mathcal{L}_n\to\mathcal{H}_n$ is defined by $\tilde{Q}(X\otimes\xi)=Q(X)\xi$ for each $X\in
E(n)$ and $\xi\in\mathcal{H}$. Let $\mathcal{L}$ be the inductive limit of $(\{\mathcal{L}_n\}_{n=0}^\infty,\{u_{nm}\}_{n,m=0}^\infty)$ and $\iota_n:\mathcal{L}_n\rightarrow\mathcal{L}$ be the canonical embedding for each $n\in\mathbb{Z}_{\geq0}$. For each $m\in\mathbb{Z}_{\geq0}$ and $X_n\in
E(n)$, we define $V_n(X_n)\in\mathcal{B}(\mathcal{L})$ by
\begin{eqnarray*}
&&V_n(X_n)(\iota_m(X_m\otimes\xi))=\iota_{n+m}(U_{n,m}(X_n\otimes
X_m)\otimes\xi)\hspace{20pt}(X_m\in
E(m),\ \xi\in\mathcal{H}).
\end{eqnarray*}
We put $N=V_0(M')'$ and define $\alpha(x)=\tilde{V}_1({\rm
id}_{E(1)}\otimes
x)\tilde{V}_1^*$ for each $x\in
N$. Then $\alpha$ is a normal unital $*$-endomorphism on $N$ such that
\begin{eqnarray*}
&&\iota_0^*N\iota_0=M,\\
&&T^n(\iota_0^*x\iota_0)=\iota_0^*\alpha^n(x)\iota_0\hspace{20pt}(n\in\mathbb{Z}_{\geq0},\ x\in
N),\\
&&T^n(y)=\iota_0^*\alpha^n(\iota_0y\iota_0^*)\iota_0\hspace{20pt}(n\in\mathbb{Z}_{\geq0},\ y\in
M).
\end{eqnarray*}
We identify $M$ with $\iota_0M\iota_0^*$ and define a projection $p=\iota_0\iota_0^*$ in $N$. Then we have
\[
M\cong
\iota_0M\iota_0^*=\iota_0\iota_0^*N\iota_0\iota_0^*=pNp\subset
N.
\]
Thus the semigroup $\{\alpha^n\}_{n=0}^\infty$ is the minimal dilation of the semigroup $\{T^n\}_{n=0}^\infty$ in the sense of {\rm\cite{arve1}} and {\rm\cite{arve2}}. We have constructed the minimal dilation in the sense of Definition \ref{1.1.}.
\subsection{The minimal dilation on the standard space}
We simplify Muhly-Solel's construction of the minimal dilation when $\mathcal{H}=L^2(M)$. When we use the notation in Subsection $4.2$, $E(0)=M'$ and for each $n\in\mathbb{N}$,
\begin{eqnarray*}
&&\mathcal{H}_n=\underbrace{(M\otimes_T(\cdots\otimes_T(M}_{\mbox{$n$ times}}\otimes_TL^2(M))\cdots)),\\
&&E(n)=\mbox{{\rm
Hom}}({}_ML^2(M),{}_M\mathcal{H}_n),\\
&&\mathcal{L}_n=E(n)\otimes
L^2(M).
\end{eqnarray*}

Then for $n\in\mathbb{Z}_{\geq0}$, the map $U_n:E(n)\otimes
L^2(M)\ni
X\otimes\xi\mapsto
X\xi\in\mathcal{H}_n$ gives an isomorphism $\mathcal{L}_n\cong\mathcal{H}_n$ as Hilbert spaces. Now, for $n\geq
m$, we define an isometry
\[
v_{n,m}=U_nu_{n,m}U_m^*:\mathcal{H}_m\rightarrow\mathcal{H}_n
\]
where $u_{n,m}:\mathcal{L}_m\to\mathcal{L}_n$ is the isometry defined in Subsection $4.2$. Then $(\{\mathcal{H}_n\}_{n=0}^\infty,\{v_{nm}\}_{n,m=0}^\infty)$ is an inductive system, and let $\mathcal{H}'$ be the inductive limit of it. Similarly as Subsection $4.2$, for each $n\in\mathbb{Z}_{\geq0}$, let $\kappa_n:\mathcal{H}_n\to\mathcal{H}'$ be the canonical embedding and we define $V_n'(X_n)\in\mathcal{B}(\mathcal{H}')$ for each $X_n\in
E(n)$ by
\begin{eqnarray*}
&&V_n'(X_n)(\kappa_m(x_1\otimes\cdots\otimes
x_m\otimes\xi))=\kappa_{n+m}(x_1\otimes\cdots\otimes
x_m\otimes
X_n\xi)\\
&&\hspace{150pt}(m\in\mathbb{Z}_{\geq0},\ x_1\otimes\cdots\otimes
x_m\otimes\xi\in\mathcal{H}_m).
\end{eqnarray*}
Then we can prove an analogue of the result in Subsection $4.2$ by looking the proof of the original theorem (\cite{muhl-sole}) i.e., if we define
\begin{eqnarray*}
&&R=V_0'(M')',\\
&&\beta(x)=\tilde{V}_1'({\rm
id}_{E(1)}\otimes
x)\tilde{V}_1'^*\hspace{20pt}(x\in
R),
\end{eqnarray*}
then $\beta$ is a normal unital $*$-endomorphism on $R$ such that
\begin{eqnarray*}
&&\kappa_0^*R\kappa_0=M,\\
&&T^n(\kappa_0^*x\kappa_0)=\kappa_0^*\alpha^n(x)\kappa_0\hspace{20pt}(n\in\mathbb{Z}_{\geq0},\ x\in
N),\\
&&T^n(y)=\kappa_0^*\alpha^n(\kappa_0y\kappa_0^*)\kappa_0\hspace{20pt}(n\in\mathbb{Z}_{\geq0},\ y\in
M).
\end{eqnarray*}
\subsection{A relation between the two constructions}
In this subsection, we use the notations in Section 3, Subsection $4.1$ and $4.2$.

By Proposition \ref{6.3.}, 
\[
E(1)=\mbox{{\rm
Hom}}({}_M\mathcal{H},{}_MM\otimes_T\mathcal{H})\cong\mbox{{\rm
Hom}}({}_M\mathcal{H},{}_M(M\otimes_TL^2(M))\otimes^M\mathcal{H})
\]
holds, and hence $E(1)$ corresponds to $\mathcal{H}^*\otimes^M(M\otimes_TL^2(M))\otimes^M\mathcal{H}$. Hence we get a one-to-one correspondence 
\begin{eqnarray*}
&&E(n)\cong\underbrace{E(1)\otimes_{M'}\cdots\otimes_{M'}
E(1)}_{\mbox{{\rm
$n$ times}}}\longleftrightarrow\mathcal{H}^*\otimes^M\mathcal{H}_n(M,T)\otimes^M\mathcal{H}.
\end{eqnarray*}
for each $n\in\mathbb{N}$.

On the other hand, for each $n\in\mathbb{N}$, we can define the tensor product $\overline{E_n}^s\otimes_ML^2(M)$ as Definition 2.4 where $E_n$ is in Subsection $4.1$. Then we have $\overline{E_1}^s\otimes_ML^2(M)\cong
M\otimes_TL^2(M)=\mathcal{H}_1$ as left $W^*$-module. Indeed a map $:\overline{E_1}^s\otimes_ML^2(M)\ni(x\otimes_Ty)\otimes\xi\mapsto
x\otimes_Ty\xi\in\mathcal{H}_1$ gives an isomorphism because we have
\begin{eqnarray*}
(x_1\otimes_Ty_1\xi_1,x_2\otimes_Ty_2\xi_2)_{\mathcal{H}_1}&=&(y_1\xi_1,T(x_1^*x_2)y_2\xi_2)_{L^2(M)}\\
&=&(\xi_1,y_1^*T(x_1^*x_2)y_2\xi_2)_{L^2(M)}\\
&=&(\xi_1,(x_1\otimes_Ty_1,x_2\otimes_Ty_2)\xi_2)_{L^2(M)}\\
&=&((x_1\otimes_Ty_1)\otimes\xi_1,(x_2\otimes_Ty_2)\otimes\xi_2)
\end{eqnarray*}
for all $x_1,x_2,y_1,y_2\in
M$ and $\xi_1,\xi_2\in
L^2(M)$. By induction, we have
\[
\mathcal{H}_{n}(M,T)\cong
\overline{E_n}^s\otimes_ML^2(M)\hspace{20pt}
\]
for each $n\in\mathbb{N}.$ Thus we have a correspondence
\[
\overline{E_n}^s\longleftrightarrow\mathcal{H}_{n}(M,T).
\]

This concludes that the constructions of the dilation by Bhat-Skeide and Muhly-Solel are essentially the same.

\hspace{1pt}\\
\hspace{1pt}\\
\hspace{1pt}\\
\hspace{1pt}\\
\hspace{1pt}\\
\hspace{1pt}\\
\hspace{1pt}\\
Yusuke Sawada,\\
Graduate School of Mathematics,\\
Nagoya University,\\
Furocho, Chikusa-ku, Nagoya, 464-8602, Japan,\\
E-mail: m14017c@math.nagoya-u.ac.jp
\end{document}